\newtheorem{theorem}{Theorem}[section]
\newtheorem*{theorem*}{Theorem}
 \newtheorem{lemma}[theorem]{Lemma}
 \newtheorem{proposition}[theorem]{Proposition}
 \theoremstyle{definition}
 \theoremstyle{remark}
 \newtheorem{remark}[theorem]{Remark}
 \numberwithin{equation}{section}
\newcommand{\pa}{\left(}
\newcommand{\pb}{\right)}
\newcommand{\cofib}{\rightarrowtail}
\newcommand{\quism}{\stackrel{\simeq}{\longrightarrow}}
\newcommand{\secat}{{\rm secat}}
\newcommand{\mQ}{\mathbb{Q}}
\newcommand{\mL}{\mathbb{L}}
\begin{document}

\title{Sectional category \`a la Quillen}

\begin{abstract}
In this note we give a characterization of the sectional category of a map between rational spaces in terms of its Koszul-Quillen model.
\end{abstract}

\author[U. Buijs]{Urtzi Buijs}
\address{Departamento de \'Algebra, Geometr\'{\i}a y Topolog\'{\i}a, Universidad
de M\'alaga, Ap. 59, 29080 M\'alaga, Spain}
\email{ubuijs@uma.es}

\author[J. Carrasquel ]{Jos\'e Carrasquel}

\email{jgcarras@gmail.com}


\maketitle

\section*{Introduction}\label{sect:intro}
The \emph{sectional category} of a continuous map $f\colon X\to Y$ is the least number of open sets of $Y$, minus one, needed to cover $Y$ on each of which $f$ has a local homotopy section \cite{Schwarz66}. This invariant of the weak homotopy type of maps has several particular cases that have been studied for a long time. For instance, the Lusternik-Schnirelmann (LS) category of a path-connected space is the sectional category of its base point inclusion \cite{LS}. Other particular cases include Farber's topological complexity \cite{F} and the LS category of a map \cite{BG,Fox}.\\

If $f$ is a map between rational spaces, then its sectional category depends only on the rational homotopy type of $f$. This simple remark invites the study of sectional category through models. The first step on this direction was the celebrated work of F\'elix and Halperin \cite{FH} where they characterise the LS category of a simply connected rational space of finite $\mQ$-type in terms of its minimal Sullivan model \cite{FHT,S}. This result was generalised in \cite{C} for sectional category of maps between such spaces admitting a homotopy retraction.\\

In this paper, we will give characterisation for the sectional category of a map in terms of its Koszul-Quillen model which we now state.\\

Let $(\mL (Z),\partial)$ be the minimal Quillen model of a space $X$. Then a minimal Quillen model for the cartesian product $X^n$ has the form $(\mL (Z^n),D)$ where $Z^n=\oplus_{j=1}^nZ^n_j$,  

\begin{equation}\label{equa:DefinZi}
	Z^n_j=\bigoplus_{1\le i_1<\cdots<i_j\le n } s^{j-1}(Z_{i_1}\otimes\cdots \otimes Z_{i_j})
\end{equation}	
	 
	 and $Z=Z_i$ \cite[VII.1.(2)]{Tanre}. Observe that in the case that $(\mL (Z),\partial)$ is a cellular Quillen model then so is $(\mL (Z^n), D)$ and the generators $Z^n$ correspond to the product $CW$-structure of $X^{n}$. Observe also that the generators of $Z_n^n$ correspond to products of $n$ positive dimension cells of $X$. 

\begin{theorem*}\label{thm:main}
Let $\varphi\colon(\mL (V),\partial)\cofib (\mL (V\oplus W), \partial)$ be the Koszul-Quillen model of certain continuous map $f\colon X\to Y$  between simply connected rational spaces. Then the sectional category of $f$ is the smallest $n$ for which there exists a dgl map \[\alpha\colon (\mL(V\oplus W ),\partial)\to (\mL((V\oplus W)^{n+1}_1\oplus U^{n+1}),D)\] such that, for all $a\in V\oplus W$, $\alpha(a)=a_1+\cdots +a_{n+1}+\xi$ with \[\xi\in \mL((V\oplus W)^{n+1}_1)*\mL^+(U^{n+1}).\]


Here $U^{n+1}$ denotes the quotient vector space $(V\oplus W)^{n+1}_{\ge 2}$ by $s^n(W_1\otimes\cdots\otimes W_{n+1})$ and $a_i$ means the inclusion of $a$ into $V_i\oplus W_i$. 

\end{theorem*}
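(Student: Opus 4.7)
The plan is to reduce the statement $\secat(f)\le n$ to a homotopy lifting problem for the diagonal $\Delta\colon Y\to Y^{n+1}$, and then to translate this lifting problem, via the Quillen model functor, into the existence of a dgl map of the stated form.

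The first step is to invoke a fat-wedge type characterization of sectional category (a relative Whitehead-style theorem generalizing the classical $\cat(X)\le n\Leftrightarrow \Delta_{n+1}\colon X\to X^{n+1}$ factors through the fat wedge $T^{n+1}(X)$). Concretely, I would argue that $\secat(f)\le n$ if and only if $\Delta\colon Y\to Y^{n+1}$ lifts up to homotopy into the complement, inside $Y^{n+1}$, of the top cell of the $(n+1)$-fold fiberwise smash of the cofiber $C_f=Y\cup_f CX$. When $X$ is contractible one recovers the usual fat wedge.

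The second step is to model this geometric picture using Tanr\'e's formula \eqref{equa:DefinZi}. Since $(\mL(V\oplus W),\partial)$ is the minimal Quillen model of $Y$, the product $Y^{n+1}$ is modeled by $(\mL((V\oplus W)^{n+1}),D)$, and the diagonal $\Delta$ is represented by a dgl map $\delta(a)=a_1+\cdots+a_{n+1}+(\text{higher corrections in }(V\oplus W)^{n+1}_{\ge 2})$, the corrections being forced by the requirement that $\delta$ commutes with differentials. In the cellular picture of Tanr\'e, a cell of $Y$ of dimension $d$ is a generator of Quillen-degree $d-1$, and the generator $s^n(W_1\otimes\cdots\otimes W_{n+1})\subset(V\oplus W)^{n+1}_{n+1}$ corresponds exactly to the product of $n+1$ cofiber cells of $f$, i.e.\ to the top cell of $(C_f)^{\wedge(n+1)}$ sitting inside $Y^{n+1}$. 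Quotienting $(V\oplus W)^{n+1}_{\ge 2}$ by this generator therefore gives a Quillen model $\mL((V\oplus W)^{n+1}_1\oplus U^{n+1})$ of the geometric complement appearing in the first step.

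The third step is the algebraic translation. A homotopy lift of $\Delta$ to this complement corresponds, after applying Quillen's equivalence, to a dgl map $\alpha\colon(\mL(V\oplus W),\partial)\to(\mL((V\oplus W)^{n+1}_1\oplus U^{n+1}),D)$ such that $\alpha(a)=a_1+\cdots+a_{n+1}+\xi$; here the linear part $a_1+\cdots+a_{n+1}$ is the image of the naive diagonal in the linear generators $(V\oplus W)^{n+1}_1$, and the correction $\xi$ must live in the Lie ideal generated by the non-linear generators $U^{n+1}$, i.e.\ in $\mL((V\oplus W)^{n+1}_1)*\mL^+(U^{n+1})$, since any purely linear correction has been absorbed into the $a_i$'s.

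The main obstacle will be Step 2: rigorously identifying the Quillen model of the geometric complement with $\mL((V\oplus W)^{n+1}_1\oplus U^{n+1})$ endowed with the induced differential $D$, showing that the quotient is well-defined at the level of the Quillen pair, and verifying both directions of the biconditional. The ``if'' direction proceeds by realizing a given $\alpha$ geometrically and composing with a Ganea-type projection to produce a section; the ``only if'' direction requires turning a homotopy section of the Ganea fibration into algebraic nullhomotopy data controlling $\xi$. Care must be taken because the differential $D$ on $(V\oplus W)^{n+1}_{\ge 2}$ is in general nontrivial, so the correction $\xi$ is not arbitrary but is essentially determined, up to dgl homotopy, by the chain condition $D\alpha=\alpha\partial$.
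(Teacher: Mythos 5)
Your overall route is the same as the paper's: Schwarz's fat-wedge characterization of $\secat$, Tanr\'e's model $(\mL((V\oplus W)^{n+1}),D)$ of $Y^{n+1}$ with the fat wedge modelled by the sub-dgl obtained by deleting $s^n(W_1\otimes\cdots\otimes W_{n+1})$, and translation of the homotopy factorization of the diagonal into the map $\alpha$. However, the step you yourself flag as ``the main obstacle'' is precisely the technical content of the paper's Section 1, and you do not supply the idea that makes it work. Tanr\'e's formula only provides the generators $(V\oplus W)^{n+1}_j$; it does not by itself guarantee that the span of $(V\oplus W)^{n+1}_1\oplus U^{n+1}$ is closed under $D$ and models $T^{n+1}(f)$, nor that the diagonal admits a model $\delta$ whose nonlinear correction lies in the ideal generated by the higher generators. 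For that one must rebuild the product model inductively with control on the differential (the paper's Propositions 1.2 and 1.3: $D(s(v\otimes w))=[v,w]+$ terms in the ideal $I_s$, condition (v) on $Z^n_{\ge 3}$, compatibility with the cellular/cone-length filtration), using acyclicity of $\ker\varphi$ and the correction mechanism of Lemma 1.4; saying ``the corrections are forced by commuting with differentials'' does not yield conditions (iv)--(vi), and without them neither direction of your argument goes through.

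There is also a concrete error in your Step 3. The claim that $\xi$ lands in $\mL((V\oplus W)^{n+1}_1)*\mL^+(U^{n+1})$ ``since any purely linear correction has been absorbed into the $a_i$'s'' only controls the linear part: it does not exclude correction terms of bracket length $\ge 2$ lying entirely in $\mL((V\oplus W)^{n+1}_1)$, which are not in the stated ideal. The paper gets this by first constructing $\delta$ satisfying (vi) (the correction pushed into the ideal via Lemma 1.4) and invoking that homotopic maps between minimal dgls have equal linear parts; and the ideal condition is exactly what makes the converse direction immediate, since then $\varphi\circ\iota\circ\alpha$ is \emph{strictly} the dgl diagonal $\Delta$, so $\iota\circ\alpha$ is a genuine lift of $\Delta$ through the quasi-isomorphism $\varphi$, hence a model of the diagonal whose realization factors through the fat wedge -- no Ganea fibration or ``algebraic nullhomotopy data'' is needed. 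A minor imprecision in your Step 2: what is deleted is not a single top cell of $(C_f)^{\wedge(n+1)}$ but the whole space $s^n(W_1\otimes\cdots\otimes W_{n+1})$ of products of $n+1$ positive-dimensional cells of $Y/X$.
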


Observe that $\alpha$ is in fact a model for the diagonal inclusion which does not \emph{touch} the top cells.
\vskip .5cm
The previous Theorem and the technical Lemmas included in Section 1 of this work constitute the theoretical background necessary to develop the project consisting of studying results in Rational Homotopy Theory related to Cartesian products of spaces.
For this purpose, in future works it will be necessary to describe explicitly the Quillen models of Cartesian products used in this article  as well as a model for the diagonal map.

In view of the characterization of the sectional category used in this article, these explicit models seem essential to understand invariants such as the LS category or topological complexity from the point of view of rational homotopy using Quillen models.

\section{Preliminaries}
The sectional category of an inclusion of a sub CW-complex $f\colon X\hookrightarrow Y$ can be characterize using the \emph{fat wedge} \[T^n(i)=\left\{ (y_1,\ldots, y_n)\in Y^n\colon \ y_l\in f(X) \mbox{ for at least one } l\in \{1,\cdots n\}\right\}.\]

\begin{proposition}\cite{Schwarz66}\label{prop:schwarz}
	The sectional category of $f$ is the smallest $n$ for which the diagonal $Y\to Y^{n+1}$ factors up to homotopy through the fat wedge $T^{n+1}(f)$.
\end{proposition}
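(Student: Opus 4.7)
The plan is to apply Schwarz's characterization (Proposition \ref{prop:schwarz}) and translate the factorization condition through the Quillen equivalence between simply connected rational spaces and dgls. Under this equivalence, $\secat(f)\le n$ becomes the existence of a dgl factorization (up to homotopy) of the Quillen model of the diagonal $\Delta\colon Y\to Y^{n+1}$ through the Quillen model of the fat wedge $T^{n+1}(f)$. The theorem will follow once Quillen models of $Y^{n+1}$, of the fat-wedge inclusion $T^{n+1}(f)\hookrightarrow Y^{n+1}$, and of the diagonal are identified explicitly.

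First, the product formula recalled from VII.1.(2) of Tanré (equation (\ref{equa:DefinZi})) gives the minimal Quillen model of $Y^{n+1}$ as $(\mL((V\oplus W)^{n+1}), D)$, decomposing by the number $j$ of factors appearing in each product cell. The diagonal $\Delta$ admits a Quillen model sending $a\in V\oplus W$ to $a_1+\cdots+a_{n+1}$ plus higher bracket-length corrections involving the $(V\oplus W)^{n+1}_{\ge 2}$ generators: the linear part in $(V\oplus W)^{n+1}_1$ records that the diagonal restricted to the $1$-skeleton is the sum of the $n+1$ coordinate inclusions, while the remaining terms arise from compatibility with the higher product cells.

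Next, I would describe the fat wedge as a subcomplex of $Y^{n+1}$. Equipping $Y$ with a relative CW structure $(Y,X)$ so that $V$ models the cells of $X$ and $W$ models the relative cells, the product cells of $Y^{n+1}$ are indexed by $(n+1)$-tuples of cells of $Y$, and the fat wedge consists of all such cells except those whose every factor is a relative cell. Under the correspondence in (\ref{equa:DefinZi}), these excluded top cells correspond exactly to the subspace $s^n(W_1\otimes\cdots\otimes W_{n+1})\subseteq (V\oplus W)^{n+1}_{n+1}$. Quotienting the product dgl by the Lie ideal generated by this subspace yields the minimal Quillen model of $T^{n+1}(f)$, namely $(\mL((V\oplus W)^{n+1}_1\oplus U^{n+1}), D)$ with $U^{n+1}$ as defined in the statement, and the fat-wedge inclusion corresponds at the level of generators to the canonical inclusion $(V\oplus W)^{n+1}_1\oplus U^{n+1}\hookrightarrow(V\oplus W)^{n+1}$.

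Combining these identifications, $\secat(f)\le n$ if and only if the diagonal model factors through the fat-wedge model up to dgl homotopy. Since $(\mL(V\oplus W),\partial)$ is cofibrant and the fat-wedge projection can be replaced by a fibration, this homotopy factorization can be strictified, yielding a dgl map $\alpha$ into $(\mL((V\oplus W)^{n+1}_1\oplus U^{n+1}), D)$. The requirement that the composition of $\alpha$ with the fat-wedge inclusion reproduce the diagonal model on the $(V\oplus W)^{n+1}_1$ component forces $\alpha(a) = a_1+\cdots+a_{n+1}+\xi$, with the remaining freedom confined to the Lie ideal generated by $U^{n+1}$ — which is precisely the content of $\xi\in\mL((V\oplus W)^{n+1}_1)*\mL^+(U^{n+1})$. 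The main technical obstacle I anticipate is verifying that the Lie ideal generated by $s^n(W_1\otimes\cdots\otimes W_{n+1})$ is preserved by $D$, so that the quotient dgl is well defined and is genuinely a Quillen model of the fat wedge; this is where the explicit description of $D$ on the top product generators, and the fact that the "excluded" cells form the highest piece of the product-cell filtration, become essential.
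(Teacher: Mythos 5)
Your proposal does not prove the stated proposition: it assumes it. Proposition \ref{prop:schwarz} is Schwarz's purely topological characterization of $\secat(f)$ --- the assertion that the open-cover definition (the least $n$ such that $Y$ is covered by $n+1$ open sets over each of which $f$ admits a local homotopy section) is equivalent to the factorization, up to homotopy, of the diagonal $Y\to Y^{n+1}$ through the fat wedge $T^{n+1}(f)$. Your very first sentence is ``apply Schwarz's characterization (Proposition \ref{prop:schwarz})'', and everything that follows is a sketch of the paper's \emph{main Theorem} (the translation of that factorization condition into Quillen models), not of the proposition itself. As a proof of Proposition \ref{prop:schwarz} this is circular, and none of the content an actual proof requires appears: nothing about open covers and local homotopy sections, nothing about shrinking the cover or using the homotopy extension property to deform the $i$-th coordinate of the diagonal into $f(X)$ over the $i$-th open set, and nothing about the converse direction of extracting an open cover with local sections from a map $Y\to T^{n+1}(f)$ lifting the diagonal (e.g.\ by pulling back, for each $i$, the open subset of the fat wedge where the $i$-th coordinate lies in a neighbourhood deformation retract of $f(X)$). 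The paper itself offers no proof either --- it cites Schwarz --- so the honest options were to cite the reference or to supply the classical open-cover argument; your text does neither.

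A secondary remark: the material you do develop (the product model of $Y^{n+1}$, the identification of the fat-wedge model as $(\mL((V\oplus W)^{n+1}_1\oplus U^{n+1}),D)$ obtained by discarding the top cells $s^n(W_1\otimes\cdots\otimes W_{n+1})$, the shape of the diagonal model, and the strictification of the homotopy factorization) is essentially the argument of Section 2 of the paper for the main Theorem, and as an outline of \emph{that} proof it is reasonable; the technical point you flag about $D$ preserving the relevant generators is indeed where Propositions \ref{prop:mainModel} and \ref{prop:ModelFor2} do the work. But all of this belongs to a different statement and cannot substitute for a proof of the topological equivalence on which it relies.
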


Observe that the fat wedge can be written as \[T^{n+1}(f)=\bigcup_{i=0}^nY^i\times X\times Y^{n-i},\] therefore, considering the product CW-structure, it can be seen as a sub CW-complex of $Y^{n+1}$ where the cells of $Y^{n+1}/T^{n+1}(i)$ are the products of $n+1$ cells of $Y/X$.\\

Using the notation of (\ref{equa:DefinZi}), the description of the model of the cartesian product of \cite[VII.1.(2)]{Tanre} can be modified to get

\begin{proposition}\label{prop:mainModel}
	Let $X$ be a topological space and $\mL(Z,\partial)$ be a minimal Quillen model for $X$. Then there is a model for the diagonal inlusion $X\hookrightarrow X^n$,
	
	\begin{center}
		\begin{tikzcd}
		&\mL(Z^n_1\oplus\cdots \oplus Z^n_n, D)\ar[d, "\varphi", "\simeq"']\\
		\mL(Z,\partial)\ar[r, "\Delta"']\ar[ur, "\delta"]&\mL(Z_1,\partial)\times\cdots\times \mL(Z_n,\partial)\\
		\end{tikzcd}
	\end{center}
	
	\begin{itemize}
		\item [(i)] $D(z_i)=\partial z_i$ for $z_i\in Z_i$,
		\item [(ii)] $\Delta$ is the diagonal $n$ inclusion of dgls,
		\item [(iii)] $\varphi(Z^n_2\oplus\cdots Z^n_n)=0$ and $\varphi(z_i)=(0,\ldots,0,z,0,\ldots,0)$ in $i$th position,
		\item [(iv)] for all $s(z_i\otimes z_j)\in Z^n_2$, $D(s(z_i\otimes z_j))=[z_i,z_j]+\xi$ with $\xi\in \mL(Z^n_1)*\mL^+(Z^n_2)$,
		\item[(v)] for $i\ge 3$, $D(Z^n_i)\subset (\mL(Z^n_{1}\oplus \cdots Z^n_{i-2})*\mL^+(Z^n_{i-1}\oplus Z^n_{i})$,
		\item[(vi)] for all $z\in Z$, $\delta(z)=z_1+\cdots+z_n+\xi$ with $\xi\in \mL (Z^n_1)*\mL^+(Z^n_2\oplus\cdots Z^n_n)$.
	\end{itemize}
\end{proposition}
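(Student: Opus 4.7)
My plan is to construct the full diagram by induction on $n$, starting from Tanré's two-factor model of $X\times Y$ and refining it so that the differential lies in the normal form (i), (iv), (v); the quasi-isomorphism $\varphi$ and the diagonal lift $\delta$ are then defined explicitly on generators and verified to extend as dgl maps. For $n=2$, the two-factor construction gives a minimal model $(\mL(Z\oplus Z\oplus s(Z\otimes Z)),D)$ with $Dz_i=\partial z_i$ and $Ds(z\otimes z')=[z,z']+\eta$; a standard change of minimal model lets one take $\eta\in\mL(Z^2_1)*\mL^+(Z^2_2)$, yielding (iv), while (v) is vacuous. The inductive step writes $X^n=X^{n-1}\times X$ and applies the two-factor case to $(\mL(Z^{n-1}),D^{n-1})$ and $(\mL(Z),\partial)$; the new generators $Z^{n-1}\oplus Z\oplus s(Z^{n-1}\otimes Z)$ match the decomposition (\ref{equa:DefinZi}) for $n$ through the embedding $s(Z^{n-1}_j\otimes Z)\hookrightarrow Z^n_{j+1}$, and (i), (iv) propagate by construction.

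Given the differential in normal form, I would define $\varphi$ as in (iii) and check the dgl identities: on $Z_i$ this reduces to (i), while on $Z^n_j$ with $j\ge 2$ the output of $D$ is a Lie monomial containing at least one factor from $Z^n_{\ge 2}$ by (iv) or (v), hence annihilated by $\varphi$. Surjectivity is immediate from the formula for $\varphi(z_i)$, and a spectral sequence argument filtering $\mL(Z^n)$ by word-length in $Z^n_{\ge 2}$ shows $\varphi$ is a quasi-isomorphism: on the associated graded the only surviving homology sits in $\mL(Z^n_1)$, where $\varphi$ is componentwise an isomorphism onto the corresponding factor.

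For $\delta$, I would work by induction on the filtration-depth $j$. The initial choice $\delta_0(z)=z_1+\cdots+z_n$ produces an obstruction $D\delta_0(z)-\delta_0(\partial z)$ in $\mL(Z^n_1)$ consisting entirely of mixed-index brackets $[\cdot_i,\cdot_j']$ with $i\ne j$; by (iv) these are precisely the leading $D$-boundaries of generators $s(z_i\otimes z_j')\in Z^n_2$, so a correction of $\delta_0$ in $\mL^+(Z^n_2)$ kills the obstruction modulo $\mL(Z^n_1)*\mL^+(Z^n_2)$. The residual obstruction then lies in $\mL(Z^n_1\oplus Z^n_2)*\mL^+(Z^n_3)$ by (v) and is killed by a further correction in $\mL^+(Z^n_3)$, and so on. The process terminates at depth $n$ and produces $\delta$ with the form (vi). The main obstacle I anticipate is verifying (v) in the precise form stated, namely that iterating Tanré's quadratic correction $n-1$ times does not push differentials beyond the stated range $\mL(Z^n_{\le i-2})*\mL^+(Z^n_{i-1}\oplus Z^n_i)$; this is a combinatorial bookkeeping argument which relies on choosing each inductive correction minimally and on a careful reading of how the new $s(Z^{n-1}_j\otimes Z)$-generators inherit their differentials from the earlier layers.
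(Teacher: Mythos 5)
Your proposal follows essentially the same route as the paper: induction on $n$ via the two-factor model of $X^{n-1}\times X$ (the paper's Proposition \ref{prop:ModelFor2}, whose differential is normalized exactly as you describe by a change of generators resting on an existence lemma for preimages of mixed brackets), a spectral sequence filtering by word-length in the suspended generators to show $\varphi$ is a quasi-isomorphism (the paper's Lemma \ref{lemma:Quism}), and a correction of the naive diagonal $z\mapsto z_1+\cdots+z_n$ by elements supported on $Z^n_{\ge 2}$ to obtain $\delta$ in the form (vi). The combinatorial verification of (v) that you flag as the main remaining obstacle is likewise left implicit in the paper's own argument.
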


One part of the proof comes from induction and the following modification of \cite[VII.1.(2)]{Tanre}.

\begin{proposition}\label{prop:ModelFor2}
	Let $(\mL (V),\partial)$ and $(\mL (W),\partial)$ be minimal cellular Quillen models for $X$ and $Y$ respectively. Then the minimal cellular Quillen model for $X\times Y$ has the form $\varphi\colon \left(\mL (V\oplus W\oplus s(V\otimes W)), D\right)\quism (\mL V,\partial)\times (\mL W,\partial)$, with $\varphi(v)=v$, $\varphi(w)=w$, $\varphi(s(v\otimes w))=0$, $D(v)=\partial(v)$, $D(w)=\partial(w)$ and \[D(s(v\otimes w))=[v,w]+D^+(s(v\otimes w)),\] where $D^+(s(v\otimes w))\in I_s:=\mL (V\oplus W)*\mL^+(s(V\otimes W))$.
\end{proposition}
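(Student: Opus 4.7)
The plan is to read the statement as a cellular refinement of Tanr\'e's construction \cite[VII.1.(2)]{Tanre}, pinning down both the shape of the generating space $V\oplus W\oplus s(V\otimes W)$ and the leading Lie bracket term of the differential on the new generators. I would build the model relatively, starting from the wedge and correcting by an obstruction argument.

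First, the coproduct $(\mL(V\oplus W), \partial) = (\mL V, \partial) * (\mL W, \partial)$ is the minimal Quillen model of $X\vee Y$, and the canonical surjection $\pi\colon \mL V * \mL W \twoheadrightarrow \mL V \times \mL W$ models the inclusion $X\vee Y \hookrightarrow X\times Y$; its kernel is precisely the dgl ideal generated by the brackets $[v,w]$ with $v\in V$, $w\in W$. To cofibrantly replace $\pi$ relative to $\mL(V\oplus W)$, I adjoin, for each basis pair $(v,w)$, one generator $s(v\otimes w)$ of degree $|v|+|w|+1$, set $\varphi(s(v\otimes w))=0$, and take as first approximation $D_0(s(v\otimes w)) = [v,w]$. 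By construction $\varphi$ is a dgl map against $D_0$, and the extension $\mL(V\oplus W) \hookrightarrow \mL(V\oplus W\oplus s(V\otimes W))$ is cellular.

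Next I would correct $D_0$ into a genuine differential. The failure $D_0^2(s(v\otimes w)) = [\partial v,w] \pm [v,\partial w]$ is a cycle in $\mL(V\oplus W)$; using the Jacobi identity, each iterated bracket $[[v_i,v_j],w]$ appearing in $[\partial v,w]$ rewrites as $[v_i,[v_j,w]] \pm [v_j,[v_i,w]] = [v_i, D_0 s(v_j\otimes w)] \pm [v_j, D_0 s(v_i\otimes w)]$, allowing one to produce inductively a correction $D^+(s(v\otimes w)) \in I_s$ with $D^2=0$. This is the Lie analogue of the Sullivan extension argument, and converges by induction on total word length. To verify that $\varphi$ is a quasi-isomorphism I would filter both sides by powers of the ideal generated by $s(V\otimes W)$ and observe that the associated graded on the source is a Koszul-type resolution of $\mL V\times \mL W$ over $\mL V * \mL W$, killing exactly the ideal $\langle [V,W]\rangle$; alternatively, since Tanr\'e's construction already yields \emph{some} minimal cellular model, uniqueness of minimal Quillen models lets one transport the structure to the preferred generating space $V\oplus W\oplus s(V\otimes W)$, and the explicit form of $\varphi$ and of the leading term of $D$ is read off.

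The main obstacle I expect is the sharp constraint $D^+(s(v\otimes w))\in I_s = \mL(V\oplus W)*\mL^+(s(V\otimes W))$, rather than merely in $\mL^+(V\oplus W\oplus s(V\otimes W))$: at every stage of the induction each monomial of the correction must contain at least one $s$-generator. Balancing this with minimality (the requirement that $D$ land in $\mL^{\ge 2}$) while tracking the Jacobi rewrites is the only delicate point in the argument, the rest being the standard obstruction-theoretic machinery.
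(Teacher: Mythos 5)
Your overall architecture (adjoin $s(V\otimes W)$ with leading term $[v,w]$, send it to zero under $\varphi$, and check the quasi-isomorphism by filtering by the number of $s$-generators) is the paper's, and your Jacobi rewriting is essentially the paper's Lemma \ref{lemma:Existence}. The gap is in the construction of $D$ itself. Your rewriting only cancels the obstruction cycle $-[\partial v,w]-(-1)^{|v|}[v,\partial w]$ \emph{to leading order}: once you replace $[v_i,w]$ by $D(s(v_i\otimes w))$ rather than $D_0(s(v_i\otimes w))$, each correction term $[x,s(v_i\otimes w)]$ contributes secondary errors $[\partial x,s(v_i\otimes w)]$ and $\pm[x,D^+(s(v_i\otimes w))]$ lying in $I_s$. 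These are cycles, but you give no reason why they are \emph{boundaries} of elements of $I_s$ (or even of $\ker\varphi$); being a cycle is not enough, and your "induction on total word length" does not obviously terminate, since the secondary errors need not have strictly larger word length than the terms they replace. What is really needed is acyclicity of $\ker\varphi$ on the part of the model already constructed, i.e.\ knowing that $\varphi$ restricted to each intermediate stage is a surjective quasi-isomorphism. This is exactly how the paper proceeds: a double induction over the cellular filtrations $\partial V_m\subset\mL(V_{<m})$, $\partial W_m\subset\mL(W_{<m})$ (which your sketch never uses, although it is what makes the induction well-founded), with Lemma \ref{lemma:Quism} (your $s$-length filtration, reduced to \cite[VII.7.(10)]{Tanre}) applied at every finite stage $L_{n,m}$ to get an exact preimage $\tau\in\ker\varphi_{n,m}$ of the obstruction, and only then Lemma \ref{lemma:Existence} to trade $\tau$ for an element of $I_s$ with the same boundary, yielding $D^+(s(v\otimes w))\in I_s$ exactly. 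In other words, the quasi-isomorphism property cannot be deferred to the end as in your plan; it must be interleaved with the construction of $D$.

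Your fallback — invoke Tanr\'e's construction and uniqueness of minimal Quillen models to "transport the structure" — also does not prove the statement. An isomorphism between minimal models need not respect the chosen decomposition $V\oplus W\oplus s(V\otimes W)$ of the generators, and in particular does not deliver the precise normal form $D(s(v\otimes w))=[v,w]+D^+(s(v\otimes w))$ with $D^+(s(v\otimes w))\in I_s=\mL(V\oplus W)*\mL^+(s(V\otimes W))$, which is the actual content of the proposition (and which, together with the sub-dgl properties over the cellular filtration recorded in the paper's proof and Remark, is what is used later for Proposition \ref{prop:mainModel} and the Theorem). So the delicate point you correctly identify at the end is precisely the point your argument leaves unproved.
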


In the proof of Proposition \ref{prop:ModelFor2} we will constantly use the following results.

\begin{lemma}\label{lemma:Quism}
	Let $L:=\left(\mL (V\oplus W\oplus s(V\otimes W)), D\right)$ be as in Proposition \ref{prop:ModelFor2} and $\varphi\colon L\rightarrow (\mL (V),\partial)\times (\mL (W),\partial)$ defined, for $v\in V$ and $w\in W$, as $\varphi(v)=v$, $\varphi(w)=w$ and $\varphi(s(v\otimes w))=0$. Then $\varphi$ is a quasi-isomorphism.
\end{lemma}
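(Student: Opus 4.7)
The plan is to filter $L=\mL(V\oplus W\oplus s(V\otimes W))$ by ``$s$-degree''---the number of factors from $s(V\otimes W)$ occurring in an iterated bracket---and to analyze the resulting spectral sequence. Set $F^pL$ to be the span of those Lie monomials containing at least $p$ generators from $s(V\otimes W)$; this is a decreasing filtration by sub-Lie-algebras (indeed $[F^p,F^q]\subset F^{p+q}$). Since $\partial$ preserves $V$ and $W$, and since
\[
D(s(v\otimes w))=[v,w]+D^+(s(v\otimes w))
\]
with $[v,w]\in F^0$ and $D^+\in I_s\subset F^1$, the differential satisfies $D(F^pL)\subset F^{p-1}L$. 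Convergence of the associated spectral sequence in each total homological degree is automatic, because every $s(v\otimes w)$ contributes a positive amount to the homological degree, so only finitely many values of $p$ make $F^pL$ non-trivial in any given total degree.

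The $E_0$-page is the complex $(L,D_{-1})$, where $D_{-1}$ is the derivation of $L$ vanishing on $V\oplus W$ and sending $s(v\otimes w)\mapsto[v,w]$. The main technical step, and the step I expect to be the principal obstacle, is the identification
\[
H_*(L,D_{-1})\cong \mL V\oplus \mL W,
\]
concentrated in $s$-degree $0$, with the isomorphism induced by the linear inclusion $\mL V\oplus \mL W\hookrightarrow L$. Conceptually, $(L,D_{-1})$ is a Koszul-type resolution of the quotient $\mL(V\oplus W)/\langle[V,W]\rangle$, which as a graded Lie algebra is precisely $\mL V\oplus \mL W$: the generators $s(V\otimes W)$ are added exactly to kill the mixed-bracket ideal. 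I would prove the acyclicity in positive $s$-degree either by constructing an explicit contracting homotopy onto $\mL V\oplus \mL W$, starting with $h([v,w])=-s(v\otimes w)$ for $v\in V$, $w\in W$ and extending inductively by bracket length, or by a secondary filtration argument reducing to the acyclicity of a standard Koszul complex.

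Granted this $E_1$-computation, the $d_1$-differential on $E_1\cong \mL V\oplus \mL W$ is induced by the $s$-degree-preserving part of $D$, which restricted to the subcomplex $\mL V\oplus \mL W\subset L$ is simply $\partial\oplus\partial$. Hence $E_2=E_\infty=H(\mL V,\partial)\oplus H(\mL W,\partial)$, matching the homology of the target product $(\mL V,\partial)\times(\mL W,\partial)$. Since $\varphi$ preserves the filtration (it kills $s(V\otimes W)$ and hence everything in positive $s$-degree, and acts as the identity in $s$-degree $0$), it induces a map of spectral sequences that is already an isomorphism on $E_1$, and therefore on homology.
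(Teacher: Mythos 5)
Your overall strategy is the same as the paper's: filter $L$ so that the associated graded differential is the purely quadratic part $s(v\otimes w)\mapsto[v,w]$, identify the homology of that quadratic complex with $\mL V\times\mL W$, and conclude by comparison of spectral sequences. But as written the central object of your proof does not exist: the subspaces $F^pL$ (monomials with at least $p$ generators from $s(V\otimes W)$) are not subcomplexes, precisely because of the relation $D(F^pL)\subset F^{p-1}L$ that you yourself record --- the summand $[v,w]$ of $D(s(v\otimes w))$ strictly lowers the $s$-degree, so $D(F^p)\not\subset F^p$ and there is no ``associated spectral sequence'' of this filtration in the usual sense (being closed under brackets is irrelevant here). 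The increasing variant (at most $p$ generators from $s(V\otimes W)$) fails as well, since $D^+(s(v\otimes w))\in I_s$ may involve two or more $s$-generators and hence raise the $s$-degree. The standard repair is exactly the filtration the paper uses: filter by total degree minus $s$-word-length, i.e. $F(k)_r=\oplus_{p=0}^k\bigl(\mL(V\oplus W)*\mL^{r-p}s(V\otimes W)\bigr)_r$. Since $D$ lowers total degree by exactly $1$ and lowers the $s$-length by at most $1$, this filtration is $D$-stable; it is bounded in each degree because $s(V\otimes W)$ is concentrated in positive degrees (your convergence remark transfers verbatim), and its $E^0$-differential is precisely your $D_{-1}$. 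With that substitution your argument is structurally sound.

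The second issue is the step you flag yourself as the main obstacle: the identification $H_*(L,D_{-1})\cong\mL V\times\mL W$, concentrated in $s$-degree $0$. You only sketch it (a contracting homotopy starting from $h([v,w])=-s(v\otimes w)$ and extended by bracket length, or a Koszul-type secondary filtration); note that such a homotopy is not automatically well defined on a free Lie algebra, because elements have many bracket representatives related by antisymmetry and Jacobi, so this would require genuine work (e.g. passing to the tensor algebra, where the Koszul contraction is unambiguous, and transferring back). The paper does not reprove this fact: it quotes \cite[VII.7.(10)]{Tanre}, which is exactly the statement that the quadratic model $(\mL(V\oplus W\oplus s(V\otimes W)),D_0)$ maps quasi-isomorphically to $(\mL V,0)\times(\mL W,0)$; once $E^0(\varphi)$ is a quasi-isomorphism the comparison theorem finishes the proof, with no need for the $E_1$/$d_1$ bookkeeping of your last paragraph (which, incidentally, has an indexing mismatch between source and target pages, harmless but worth cleaning up). So: correct architecture, but you must replace the $s$-degree filtration by the degree-shifted one, and either cite Tanr\'e's computation or actually carry out the contraction.
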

\begin{proof}
	We will use a spectral sequence argument. Define an increasing filtration on $L$ as $F(k)_r=\oplus_{p=0}^k\pa\mL(V\oplus W)*\mL^{r-p}s(V\otimes W)\pb_r$, where $r$ denotes degree. Observe that $D(F(k))\subset F(k)$ because $s(V\otimes W)$ is concentrated in positive degrees. Filter also $\mL (V)\times \mL (W)$ by degree, that is $G(k)=\pa\mL (V)\times \mL (W)\pb_{\le k}$. Observe that $\varphi$ respects these filtrations. Indeed, if $x\in\mL(V\oplus W)*\mL^{|x|-k}s(V\otimes W)$ then either $\varphi(x)=0$ or $|x|=k$. Then the induced morphism is \[E^0(\varphi)\colon \left(\mL (V\oplus W\oplus s(V\otimes W)), D_0\right)\rightarrow (\mL (V),0)\times (\mL (W),0)\]
	with $D_0(v)=D_0(w)=0$ and $D(s(v\otimes w))=[v,w]$, for $v\in V$ and $w\in W$. By \cite[VII.7. (10)]{Tanre}, $E^0(\varphi)$ is a quasi-isomorphism.
\end{proof}

We also need
\begin{lemma}\label{lemma:Existence}
	Let $\left(\mL (V\oplus W\oplus s(V\otimes W)), D\right)$ be as in Proposition \ref{prop:ModelFor2}. Then for every $\xi\in \mL^+(V)*\mL^+(W)$ there exist $\beta(\xi),D^+(\beta(\xi))\in I_S$ such that $D(\beta(\xi))=\xi+D^+(\beta(\xi))$.
\end{lemma}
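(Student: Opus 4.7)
The plan is to argue by induction on bracket length, reducing by linearity to the case where $\xi\in\mL^+(V)*\mL^+(W)$ is a single Lie word in the generators of $V\oplus W$. The key input is Proposition \ref{prop:ModelFor2}, which gives $D(s(v\otimes w))=[v,w]+D^+(s(v\otimes w))$ with $D^+(s(v\otimes w))\in I_s$. For the base case of length two, graded antisymmetry reduces to $\xi=[v,w]$ with $v\in V$ and $w\in W$, and then $\beta(\xi):=s(v\otimes w)\in I_s$ satisfies $D\beta(\xi)=\xi+D^+\beta(\xi)$ with $D^+\beta(\xi)\in I_s$.

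For the inductive step, let $\xi$ be a single Lie word of length $n\ge 3$ in $\mL^+(V)*\mL^+(W)$. Using the graded Jacobi identity I left-normalise $\xi$ as a linear combination $\sum_j c_j[u_j,\eta_j]$ where each $u_j\in V\cup W$ is a single generator and each $\eta_j$ is a Lie word of length $n-1$. The subtle point is that an individual $\eta_j$ may lie purely in $\mL(V)$ or in $\mL(W)$, so the inductive hypothesis does not apply to it directly. When this happens $u_j$ is of the opposite type, and I apply graded antisymmetry to $[u_j,\eta_j]$ together with further rounds of Jacobi to rotate a generator of $\eta_j$'s type to the outside while pushing $u_j$ into the interior; this strictly decreases the length of the single-type subword at each round, so the procedure terminates in a decomposition $\xi=\sum_j c_j[u_j,\eta_j]$ in which every $\eta_j$ already belongs to $\mL^+(V)*\mL^+(W)$.

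Given such a decomposition, the induction hypothesis supplies $\beta(\eta_j)\in I_s$ with $D\beta(\eta_j)=\eta_j+D^+\beta(\eta_j)$ and $D^+\beta(\eta_j)\in I_s$. Setting
\[
\beta(\xi):=\sum_j c_j(-1)^{|u_j|}[u_j,\beta(\eta_j)]\in I_s,
\]
the graded Leibniz rule yields
\[
D\beta(\xi)=\xi+\sum_j c_j(-1)^{|u_j|}[\partial u_j,\beta(\eta_j)]+\sum_j c_j[u_j,D^+\beta(\eta_j)],
\]
and both correction sums lie in $I_s$ since $I_s$ is a Lie ideal containing $\beta(\eta_j)$ and $D^+\beta(\eta_j)$. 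The main obstacle I anticipate is the combinatorial Jacobi-and-antisymmetry rewriting of the previous paragraph: one must verify that rotating a single-type generator into the interior really does terminate with inner factors in $\mL^+(V)*\mL^+(W)$, and the Koszul-sign bookkeeping needs to be handled carefully.
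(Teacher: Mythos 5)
Your proof is correct and takes essentially the same route as the paper: induction on bracket length, with the base case handled by $s(v\otimes w)$ and the inductive step obtained by rewriting $\xi$ via Jacobi as a combination of brackets having a mixed factor of strictly smaller length, then applying the Leibniz rule and using that $I_s$ is an ideal. The rewriting step you single out as the main obstacle is precisely what the paper dispatches with the phrase ``By Jacobi'' (it amounts to the standard fact that $\mL^+(V)*\mL^+(W)$ is the Lie ideal generated by the brackets $[v,w]$), so your extra care there only spells out what the paper leaves implicit.
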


\begin{proof}
	The result is obvious if $\xi\in V*W$. Now suppose the result is true for elements of bracket length less than $n$ and let $\xi\in \mL^+(V)*\mL^+(W)$ be of bracket length $n\ge 3$. By Jacobi, $\xi$ is linear combination of elements of the form $[\xi',\omega]$ with $\xi'\in \mL^+(V)*\mL^+(W)$ of bracket length less than $n$. Then, by induction \[D([\beta(\xi'),\omega])=[\xi',\omega]+[D^+(\beta(\xi'),\omega]-(-1)^{|\xi'|}[\beta(\xi'),\partial(w)]\] with $[D^+(\beta(\xi'),\omega]-(-1)^{|\xi'|}[\beta(\xi'),\partial(w)]\in I_s$.
\end{proof}

We may now proceed to the 

\begin{proof}[Proof of Proposition \ref{prop:ModelFor2}]
First write $V=\oplus_{m\ge 0}V_m$ with $\partial V_m\subset \mL (V_{<m})$ and analogously for $W$. Now write  $L_m:=\left(\mL (V\oplus W_{<m}\oplus s(V\otimes W_{<m})), D\right)$ and suppose we have \[\varphi_m\colon L_m\quism (\mL (V),\partial)\times (\mL (W_{<m}),\partial)\] as in the statement of the proposition and such that, for every $n$ and $m'<m$, $\pa\mL (V_{\le n}\oplus W_{\le m'}\oplus s(V_{\le n}\otimes W_{\le m'}),D\pb$ is a sub dgl of $L$. Observe that $\varphi_1$ exists by \cite[Section 3]{Lupton-Smith}. We will construct $\varphi_{m+1}$ inductively. Throughout the proof we will denote $L_{n,m}:=\left(\mL (V_{<n}\oplus W_{<m}\oplus s(V_{<n}\otimes W_{<m})), D\right)$, $I_s^{n,m}$ the Lie ideal of $L_{n,m}$ generated by $s(V_{<n}\otimes W_{<m})$ and \[\varphi_{n,m}\colon L_{n,m}\to (\mL (V_{<n}),\partial)\times (\mL (W_{<m}),\partial)\] the obvious restriction.  Observe that, by Lemma \ref{lemma:Quism}, $\varphi_{m,n}$ is a quasi-isomorphism whenever it is defined.\\

Let $v\otimes w\in V_0\otimes W_m$. We have that $\partial([v,w])=(-1)^{|v|}[v,\partial w]$ is a cycle in $\ker\varphi_{1,m}$. Since $\ker\varphi_{1,m}$ is acyclic, there exists $\tau\in \ker\varphi_{1,m}$ such that $D(\tau)=D([v,w])$. Write $\tau=\tau'+\tau''$ with $\tau'\in \mL^+(V)*\mL^+(W_{<m})$ and $\tau''\in I_s$. By Lemma \ref{lemma:Existence} applied to $L_{1,m}$ , there exist $\beta(\tau')\in I_s^{1,m}$ such that $D(\beta(\tau'))=\tau'+D^+(\beta(\tau'))$. Then $D^+(s(v\otimes w)):=D(\beta(\tau'))-\tau\in I_s^{1,m}$ and $D(D^+(s(v\otimes w)))=-D(\tau)=-D([v,w])$. Now define $D(s(v\otimes w))=[v,w]+ D^+(s(v\otimes w))$.\\

By doing this process over homogeneous basis of $V_0$ and $W_m$ we construct a Koszul-Quillen extension $L_m\cofib (L_m*\mL(W_m\oplus s(V_0\otimes W_m),D)$ with $D$ verifying the properties of the proposition's statement and all possible $L_{n,m}$ being sub dgls. In particular, $\varphi_{1,m+1}$ is now defined.\\

Now suppose we have constructed a Koszul-Quillen extension $L_m\cofib (L_m*\mL(W_m\oplus s(V_{<n}\otimes W_m),D)$ with $D$ as desired and all possible $L_{n,m}$ being sub dgls. In particular, $\varphi_{n,m+1}$ and $\varphi_{n+1,m}$ are defined and are quasi-isomorphisms.\\ 

Let $v\otimes w\in V_n\otimes W_m$. We have that $[\partial v,\partial w]$ is a cycle in the acyclic ideal $\ker\varphi_{n,m}$, this implies that there exists $\psi\in\ker\varphi_{n,m}$ such that $D(\psi)=[\partial v,\partial w]$. Now we have a cycle $[\partial v,w]+(-1)^{|v|}\psi$ in $\ker\varphi_{n,m+1}$. As before, there exists an element $\omega\in \ker\varphi_{n,m+1}$, which can actually be taken in $I_s^{n,m+1}$, such that $D(\omega)=[\partial v,w]+(-1)^{|v|}\psi$. Analogously, there exists $\pi\in I_s^{n+1,m}$ such that $D(\pi)=[v,\partial w]-\varphi$. Finally, define $D(s(v\otimes w))=[v,w]-\omega-(-1)^{|v|}\pi\in \ker\varphi_{n+1,m+1}$.\\

By repeating in homogeneous basis, we extend to a Koszul-Quillen extension $L_m\cofib (L_m*(\mL W_m\oplus s(V_{<m+1}\otimes W_m)),D)$ that closes the induction.
\end{proof}

\begin{remark}
	We have proven that such model actually respects \emph{cone-length} filtration in the sense that the restriction to $\pa \mL(V_{\le n}\oplus W _{\le n}\oplus s(V_{\le n}\otimes W_{\le m}) ),D\pb$ is quasi-isomorphic to $(\mL (V_{\le n}),\partial)\times(\mL (W_{\le m}),\partial)$. Moreover we have constructed it such that\[D(s(V_n\otimes W_m))\subset \mL(V_{\le n}\oplus W_{\le m}\oplus s(V_{<n}\otimes W_{\le m})\oplus s(V_{\le n}\otimes W_{< m})).\] Lastly, since the differentials $\partial$ are minimal, $D$ has no linear part.
\end{remark}

To finalize the proof of Proposition \ref{prop:mainModel} we lift $\Delta$ through $\varphi$ to get $\delta$ and then use Lemma \ref{lemma:Existence}, as in the proof of Lemma \ref{prop:ModelFor2}, to \emph{fix} $\delta$ such that it verifies (vi). 

\section{Proof of Theorem}

\noindent Let $\varphi\colon(\mL (V),\partial)\cofib (\mL (V\oplus W), \partial)$ be a Koszul-Quillen model for $f$, then the inclusion $T^{n+1}(i)\hookrightarrow Y^{n+1}$ is modelled by the inclusion \[\iota\colon (\mL ((V\oplus W)^{n+1}_1\oplus U^{n+1}),D)\cofib (\mL (V\oplus W)^{n+1}).\]\\

Now suppose $\secat(f)=n$, then by Proposition \ref{prop:schwarz}, the $m+1$ diagonal factors through the fat-wedge, by standard rational homotopy theory techniques, there is a homotopy commutative diagram 

\begin{center}
	\begin{tikzcd}
		&(\mL ((V\oplus W)^{n+1}_1\oplus U^{n+1}),D)\ar[dr, "\iota"]&\\
		(\mL(V\oplus W ),\partial)\ar[ur, "\alpha"]\ar[rr, "\delta"']&&(\mL(V\oplus W )^{n+1},D)\\	
	\end{tikzcd}
\end{center}

Observe that the $\alpha$ verifies the conditions of Theorem because the linear parts of homotopic morhpisms between minimal dgls coincide.\\

Conversely, suppose a map $\alpha$ of Theorem exists, then we have a commutative diagram
\begin{center}
\begin{tikzcd}
			&\mL(Z^n_1\oplus\cdots \oplus Z^n_n, D)\ar[d, "\varphi", "\simeq"']\\
			\mL(Z,\partial)\ar[r, "\Delta"']\ar[ur, "\iota\circ\alpha"]&\mL(Z_1,\partial)\times\cdots\times \mL(Z_n,\partial)\\
\end{tikzcd} 
\end{center}
which exposes $\iota\circ\alpha$ as a model for the diagonal $n+1$ inclusion. Realizing $\iota\circ \alpha$ gives a homotopy factorization of the diagonal through the fat-wedge.
\vskip .5cm

\noindent\textbf{Acknowledgements.}  
The first author was partially supported by the MICINN grant PID2020-118753GB-I00 of the Spanish
Government and the Junta de Andaluc\'ia grant ProyExcel-00827

This work has been supported by the National Science Centre grant \\ 2016/21/P/ST1/03460 within the European Union's Horizon 2020 research and innovation programme under the Marie Sk\l{}odowska-Curie grant agreement No. 665778.

\begin{flushright}
	\includegraphics[width=38px]{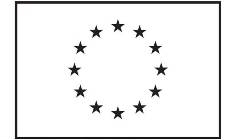}%
\end{flushright}

\end{document}